\renewcommand{\d}{\mathrm{d}}
\renewcommand{\L}{\mathrm{L}}
\renewcommand{\H}{\mathrm{H}}
\newtheorem{theorem}{Theorem}
\newtheorem{definition}{Definition}
\newtheorem{remark}{Remark}
\newtheorem{proposition}{Proposition}
\journal{Elsevier}
\begin{document}

\begin{frontmatter}

\title{Riccati equations and LQ-optimal control for a class of hyperbolic PDEs}

\author[label1]{Anthony Hastir}
\author[label1]{Birgit Jacob}
\author[label2,label3]{Hans Zwart}

\affiliation[label1]{organization={School of Mathematics and Natural Sciences, University of Wuppertal},
             addressline={Gaußstraße 20},
             postcode={42119},
             city={Wuppertal},
             country={Germany}}

\affiliation[label2]{organization={Department of Applied Mathematics, University of Twente},
             addressline={P.O. Box 217},
             postcode={7500 AE},  
             city={Enschede},
             country={The Netherlands}}

\affiliation[label3]{organization={Department of Mechanical Engineering, Eindhoven University of Technology},
             addressline={P.O. Box 513},
             postcode={5600 MB},
             city={Eindhoven},
             country={The Netherlands}}

\begin{abstract}
We derive an explicit solution to the operator Riccati equation solving the Linear-Quadratic (LQ) optimal control problem for a class of boundary controlled hyperbolic partial differential equations (PDEs). Different descriptions of the system are used to obtain different representations of the operator Riccati equation. By means of an example, we illustrate the importance of considering an extended operator Riccati equation to solve the LQ-optimal control problem for our class of systems.
\end{abstract}

\begin{keyword}
Riccati equations; LQ-optimal control; Hyperbolic PDEs; Unbounded input and output operators; Spectral factorization
\end{keyword}

\end{frontmatter}

\section{Introduction}\label{sec:Intro}
The Linear-Quadratic (LQ) optimal control problem is a well-known and well-established optimization problem within control theory. Its solution can be easily described for systems driven by the continuous-time dynamics
\begin{align}
  \dot{x}(t) &= Ax(t) + Bu(t),\label{State_Finite}\\
  x(0) &= x_0,\label{Init_Finite}\\
  y(t) &= Cx(t) + Du(t),\label{Output_Finite}
\end{align}
where $A, B, C, D$ are either matrices of appropriate sizes or $A$ is an (unbounded) operator that generates a strongly continuous semigroup on the Hilbert space $X$ and $B,C,D$ are bounded linear operators, i.e., $B\in\mathcal{L}(U,X), C\in\mathcal{L}(X,Y), D\in\mathcal{L}(U,Y)$ with the Hilbert spaces $U$ and $Y$ being the input and the output spaces, respectively. In both cases, the optimal control input which minimizes the cost functional
\begin{equation}
J(x_0,u) = \int_0^\infty \Vert u(t)\Vert^2_U + \Vert y(t)\Vert^2_Y\d t
\label{Cost}
\end{equation}
over $u$ is given by the state feedback $u(t) = Kx(t) = -B^*\Theta x(t)$, where $\Theta = \Theta^*, \Theta\in\mathcal{L}(X)$, is the smallest nonnegative solution of the control algebraic Riccati equation
\begin{equation*}
A^*\Theta + \Theta A + C^*C = \left(\Theta B + C^*D\right)\left(I + D^*D\right)^{-1}\left(B^*\Theta + D^*C\right).
\end{equation*}
This Riccati equation may also be formulated in the weak sense as
\begin{align}
&\langle Ax, \Theta x\rangle + \langle \Theta x, Ax\rangle + \langle Cx, Cx\rangle\nonumber\\
&= \langle (I + D^*D)^{-1}(B^*\Theta + D^*C)x,(B^*\Theta + D^*C)x\rangle,
\label{Riccati_WeakForm}
\end{align}
for $x\in D(A)$ or $x\in X$ if $X$ is finite-dimensional. In both cases, the closed-loop system is obtained by applying the optimal control input to the original system \eqref{State_Finite}--\eqref{Output_Finite}. Moreover, the optimal cost is given by $\langle \Theta x_0,x_0\rangle = \inf_{u\in\L^2(0,\infty;U)}J(x_0,u)$. The theory of LQ-optimal control for both finite and infinite-dimensional systems with bounded input and output operators is described extensively in \cite{Curtain_Pritchard_Riccati}, \cite[Chapter 4]{CurtainPritchard} and \cite{Kwakernaak_Sivan}, or in \cite[Chapter 9]{CurtainZwart2020} for a more recent version. 

Besides a state space solution of LQ-optimal control, a frequency domain approach to LQ-optimal control can be used, known as the spectral factorization method, mainly based on the ability of the so-called Popov function to admit a spectral factor. For more details about this method, we refer the reader to \cite{CallierWinkin1990} and \cite{CallierWinkin1992}. 

The approaches described above are no more valid when the input and output operators are allowed to be unbounded. A first problem that pops up in \eqref{Riccati_WeakForm} is that $B^*\Theta x$ has no meaning so that $B^*$ has to be replaced. This problem is highlighted by a counterexample in \cite{Weiss_Zwart}. The other main problem that occurs in \eqref{Riccati_WeakForm} comes from the operator $I + D^*D$. When the original system is regular, the operator $D$ is obtained as the limit for $s$ going to $\infty$ of the transfer function $\hat{\mathbf{G}}(s)$ along the real axis. Moreover, the Popov function is defined as 
\begin{equation}
\mathbf{\Phi}(i\omega) := I + \hat{\mathbf{G}}(i\omega)^*\hat{\mathbf{G}}(i\omega), \omega\in\mathbb{R}.
\label{eq:Popov}
\end{equation}
A spectral factor of $\mathbf{\Phi}$ is a function\footnote{The Hardy space $\mathbf{H}^\infty(\mathcal{L}(U))$ consists in analytic functions $\mathbf{G}:\mathbb{C}_0^+\to\mathcal{L}(U)$ that are bounded in $\mathbb{C}_0^+$, that is, $\sup_{s\in\mathbb{C}_0^+}\Vert\mathbf{G}(s)\Vert<\infty$, where $\mathbb{C}_0^+ = \{s\in\mathbb{C}, \mathrm{Re}\,s> 0\}$.} $\boldsymbol{\chi}\in \mathbf{H}^\infty(\mathcal{L}(U))$ for which $\boldsymbol{\chi}^{-1}\in \mathbf{H}^\infty(\mathcal{L}(U))$ and that satisfies 
\begin{align}
\mathbf{\Phi}(i\omega) = \boldsymbol{\chi}(i\omega)^*\boldsymbol{\chi}(i\omega), 
\label{SpecFact_Popov}
\end{align}
for almost every $\omega\in\mathbb{R}$. This is called the spectral factorization technique. It has been developed in \cite{WeissWeiss} for weakly regular systems and further developments on this method may be found in \cite{Opmeer_Staffans} and \cite{Opmeer_MTNS}. When the spectral factor is regular, we may define $\Omega = \lim_{s\to\infty, s\in\mathbb{R}}\boldsymbol{\chi}(s)$. In the case where the input and the output operators are bounded, there holds $I + D^*D = \Omega^*\Omega$ but this is in general not the case, see e.g.~\cite{Opmeer_MTNS} for a counterexample highlighting this feature. That is why the term $I + D^*D$ has to be adapted in the more general situation. Because of the two aforementioned problems, the authors in \cite{WeissWeiss} and \cite{Staffans_LQ_1998} proposed the following operator Riccati equation
\begin{align}
  &\langle Ax, \Theta x\rangle + \langle \Theta x, Ax\rangle + \langle Cx, Cx\rangle\nonumber\\
  &= \langle (\Omega^*\Omega)^{-1}(B^*_\omega\Theta + D^*C)x,(B^*_\omega\Theta + D^*C)x\rangle,
\label{Riccati_Unbounded}
\end{align}
$x\in D(A)$, where $B^*_\omega$ is the Yosida extension of $B^*$, whose precise definition is given in e.g.~\cite{Weiss_Yosida_Ext} or \cite{Tucsnak_Weiss_LTI_Case} and will be shown later in this note. Another major difference between the bounded and the unbounded situation is that the closed-loop system is not necessarily obtained by connecting the optimal feedback to the original system, see~\cite[Sec.~3]{Weiss_Zwart}. 

Different variations of the Riccati equation \eqref{Riccati_Unbounded} exist, see e.g.~\cite{Pritchard_Salamon}, \cite{WeissWeiss}, \cite{Curtain_LQ_SIAM} and \cite{mikkola2016riccati}. 
In \cite{WeissWeiss}, the spectral factorization method is extended to unbounded control and observation operators, giving rise to a solution in the frequency domain. In \cite{Curtain_LQ_SIAM}, Riccati equations for well-posed linear systems have been considered under the assumption that $0$ is in the resolvent set of the operator dynamics, giving the possibility to define a reciprocal Riccati equation, based on the reciprocal system associated to the original one. A few years later, \cite{mikkola2016riccati} proposed generalizations of previous works on LQ-optimal control by introducing an integral Riccati equation. More recently, another type of Riccati equation called the \textit{operator node Riccati equation} has been considered in \cite{Opmeer_MTNS} and \cite{Opmeer_Staffans} for general infinite-dimensional systems. Moreover, links between the solution of the LQ-optimal control problem for general PDEs and for the discrete-time system obtained by applying the internal Cayley transform to them are presented in \cite{Opmeer_Staffans}. However, obtaining a closed-form of a solution to \eqref{Riccati_Unbounded} is in general very difficult, or even impossible. 

In this note, we consider a class of hyperbolic PDEs controlled and observed at the boundary and we propose an analytical solution to the opeartor-node Riccati equation introduced in \cite{Opmeer_Staffans}, which is based on the system node representation of our system. Moreover, we show that the obtained solution solves another version of that Riccati equation, namely \eqref{Riccati_Unbounded}. A factorization of the Popov function is also given. We take advantage of the solution to the LQ-optimal control problem for that class of systems, which has been developed in \cite{HastirJacobZwart_LQ}. The main difference between \cite{HastirJacobZwart_LQ} is as follows: in \cite{HastirJacobZwart_LQ}, we derive the solution of the LQ-optimal control problem by representing the system equivalently as an infinite-dimensional discrete-time system, which enabled us to bypass the computation of \eqref{Riccati_Unbounded}. Here, we focus on an operator Riccati equation and we solve it explicitely. We emphasize that computing the analytical solution to an operator Riccati equation like \eqref{Riccati_Unbounded} would have been challenging without taking benefit of the approach used in \cite{HastirJacobZwart_LQ}.

This note is organized as follows: the system class, its properties and the solution to the LQ-optimal control for that class are presented in Section \ref{sec:SystemClass}. An explicit solution to the operator-node Riccati equation is presented in Section \ref{sec:Riccati_SpecFact} together with a factorization of the Popov function. It is also shown that the obtained solution solves \eqref{Riccati_Unbounded}. Section \ref{sec:CE} is dedicated to an example highlighting the importance of considering \eqref{Riccati_Unbounded} to solve the LQ-optimal control problem. Some perspectives are proposed in Section \ref{sec:Ccl}.

\section{System class and linear-quadratic optimal control}\label{sec:SystemClass}

We consider partial differential equations (PDEs) of the form 
\begin{align}
\frac{\partial z}{\partial t}(\zeta,t) &= -\frac{\partial}{\partial \zeta}(\lambda_0(\zeta)z(\zeta,t)),\,\,\,\,\,\,z(\zeta,0) = z_0(\zeta)\in X,\label{StateEquation_Diag_Uniform}
\end{align}
where $\lambda_0(\cdot)\geq \epsilon>0$ is a bounded measurable function and $X := \L^2(0,1;\mathbb{C}^n)$ is the Hilbert state space equipped with the inner product
\begin{equation*}
\langle f,g\rangle_X := \int_0^1g^*(\zeta)\lambda_0(\zeta)f(\zeta)\d\zeta.
\end{equation*}
The parameters $t\geq 0$ and $\zeta\in [0,1]$ stand for time and space, respectively. To the PDEs \eqref{StateEquation_Diag_Uniform}, the following boundary inputs and outputs are considered
\begin{align}
\left[\begin{matrix}0\\ I\end{matrix}\right]u(t) &= -K\lambda_0(0)z(0,t) - L\lambda_0(1)z(1,t),\label{Input_Diag_Uniform}\\
y(t) &= -K_y\lambda_0(0)z(0,t) - L_y\lambda_0(1)z(1,t),\label{Output_Diag_Uniform}
\end{align}
with $u(t)\in\mathbb{C}^p =: U$ and $y(t)\in\mathbb{C}^m =: Y$ being the inputs and the outputs, respectively. The matrices $K, L, K_y$ and $L_y$ satisfy $K\in\mathbb{C}^{n\times n}, L\in\mathbb{C}^{n\times n}, K_y\in\mathbb{C}^{m\times n}$ and $L_y\in\mathbb{C}^{m\times n}$, respectively. It is also possible to add the term $M(\zeta)z(\zeta,t)$ in the PDEs \eqref{StateEquation_Diag_Uniform} where $M(\cdot)\in\L^\infty(0,1;\mathbb{C}^{n\times n})$. In that case, it is shown in \cite{HastirJacobZwart_LCSS} and \cite{HastirJacobZwart_LQ} that the change of variables $\tilde{z}(\zeta,t) = Q(\zeta)z(\zeta,t)$ makes the term $M(\zeta)z(\zeta,t)$ disappear\footnote{The matrix $Q$ is the solution to the matrix differential equation $Q'(\zeta) = -\lambda_0(\zeta)^{-1}Q(\zeta)M(\zeta),\,\,\, Q(0) = I$.}. It only affects the boundary conditions slightly but they remain of the same form. For this reason, we will not add the term $M(\zeta)z(\zeta,t)$ in the PDEs \eqref{StateEquation_Diag_Uniform}. This is without loss of generality.

The class \eqref{StateEquation_Diag_Uniform}--\eqref{Output_Diag_Uniform} is able to describe many different physical phenomena. For instance, the dynamics of networks of electric lines, chains of density-velocity systems or genetic regulatory networks may be written with hyperbolic PDEs similar to \eqref{StateEquation_Diag_Uniform}--\eqref{Output_Diag_Uniform}. In a more general sense, linear conservation laws may be modeled as \eqref{StateEquation_Diag_Uniform}--\eqref{Output_Diag_Uniform}. We refer the reader to e.g.~\cite{BastinCoron_Book} for a larger set of applications whose dynamics admit the representation \eqref{StateEquation_Diag_Uniform}--\eqref{Output_Diag_Uniform}. A more general version of that class is presented and analyzed in the book \cite[Chapter 6]{Luo_Guo}, allowing for instance for a diagonal matrix with different entries instead of the function $\lambda_0$. The class \eqref{StateEquation_Diag_Uniform}--\eqref{Output_Diag_Uniform} has also been considered in several works. Among them, Lyapunov exponential stability for \eqref{StateEquation_Diag_Uniform}--\eqref{Output_Diag_Uniform} has been studied in \cite{Bastin_2007, Coron_2007_Lyapunov, Diagne_2012}, whereas backstepping for boundary control and output feedback stabilization for networks of linear one-dimensional hyperbolic PDEs has been considered in \cite{Auriol_2020, Auriol_BreschPietri, Vazquez_2011_Backstepping}. Boundary feedback control for \eqref{StateEquation_Diag_Uniform}--\eqref{Output_Diag_Uniform} has been considered in \cite{DeHalleux_2003,Prieur_2008_ConservationLaws,Prieur_Winkin_2018}. Controllability and finite-time boundary control have also been studied in e.g.~\cite{Chitour_2023, Coron_2021_NullContr} and \cite{Auriol_2016,Coron_2021_Stab_FiniteTime}. Boundary output feedback stabilization for \eqref{StateEquation_Diag_Uniform}--\eqref{Output_Diag_Uniform} has been investigated in \cite{Tanwani2018} in the case of measurement errors, leading to the study of input-to-state stability for those systems. More recently, Riesz-spectral property of the homogeneous part of \eqref{StateEquation_Diag_Uniform}--\eqref{Output_Diag_Uniform} has been analyzed in \cite{HastirJacobZwart_LCSS}. Moreover, linear-quadratic optimal control has been developed for \eqref{StateEquation_Diag_Uniform}--\eqref{Output_Diag_Uniform} in \cite{HastirJacobZwart_LQ}. In addition, $H^\infty$-controllers have been computed for \eqref{StateEquation_Diag_Uniform}--\eqref{Output_Diag_Uniform} in \cite{HastirJacobZwart_Hinf}.

\subsection{System theoretic properties}

Here, we emphasize some properties of the class \eqref{StateEquation_Diag_Uniform}--\eqref{Output_Diag_Uniform} that will be useful later in this manuscript. We start by well-posedness. It has been shown in \cite{HastirJacobZwart_LCSS} that the well-posedness of the boundary control system \eqref{StateEquation_Diag_Uniform}--\eqref{Output_Diag_Uniform} is equivalent to the invertibility of the matrix $K$. We make this assumption in what follows. Our next objective is to write \eqref{StateEquation_Diag_Uniform}--\eqref{Output_Diag_Uniform} as an abstract controlled and observed Cauchy problem of the form
\begin{equation}
\left\{
\begin{array}{l}
\dot{z}(t) = Az(t) + Bu(t)\\
y(t) = Cz(t) + Du(t),
\end{array}\right.
\end{equation}
where $A:D(A)\subset X\to X$ is an unbounded linear operator, $B:U\to X_{-1}$ and $C:D(A)\to Y$ are the control\footnote{The space $X_{-1}$ is the completion of $X$ with respect to the norm $\Vert\cdot\Vert_{-1} := \Vert (\beta I-A)^{-1}\cdot\Vert$ for some $\beta\in\rho(A)$, the resolvent set of $A$.} and the observation operators and $D:U\to Y$ is the feedthrough operator. First observe that by defining the matrices 
\begin{equation}
\begin{array}{ll}
A_d := -K^{-1}L, & B_d := -K^{-1}\left[\begin{matrix}0\\ I\end{matrix}\right],\\
C_d := (K_yK^{-1}L-L_y), & D_d := K_yK^{-1}\left[\begin{matrix}0\\ I\end{matrix}\right],
\end{array}\label{Matrices_Discrete}
\end{equation}
the input and the output equations \eqref{Input_Diag_Uniform}--\eqref{Output_Diag_Uniform} may be rewritten as
\begin{align}
B_d u(t) &= \lambda_0(0) z(0,t) - A_d \lambda_0(1)z(1,t),\label{Input_New}\\
y(t) &= C_d \lambda_0(1)z(1,t) + D_d u(t),\label{Output_New}
\end{align}
respectively. According to \eqref{StateEquation_Diag_Uniform} and \eqref{Input_New}, the operator $A$ is given by 
\begin{align}
Az &= -\frac{\d}{\d\zeta}(\lambda_0 z)\label{Op(A)}\\
D(A) &= \{z\in X, \lambda_0 z\in\H^1(0,1;\mathbb{C}^n),\nonumber\\
    &\hspace{1cm} (\lambda_0z)(0) - A_d(\lambda_0z)(1) = 0\}.\label{D(A)}
\end{align}
As is motivated in \cite{TucsnakWeiss}, it is in general more common to define the adjoint of the operator $B$ instead of $B$ itself. The adjoint of $B$, denoted by $B^*$, is defined on $D(A^*)$ with values in $U$, where $A^*$ is the adjoint of $A$. Before computing the operator $B^*$, observe that $A^*$ is defined as 
\begin{align*}
A^* z &= \frac{\d}{\d\zeta}(\lambda_0 z),\\
D(A^*) &= \{z\in X, \lambda_0 z\in\H^1(0,1;\mathbb{C}^n),\nonumber\\
    &\hspace{1cm} (\lambda_0z)(1) - A_d^*(\lambda_0z)(0) = 0\},
\end{align*}
see~\cite[Prop.~3.4.3]{Augner_PhD}.
In order to compute the adjoint operator of $B$, we use the following relation 
\begin{equation}
\langle Lz,\psi\rangle = \langle z,A^*\psi\rangle + \langle Gz,B^*\psi\rangle,
\label{TucWei_AdjointB}
\end{equation}
for $\psi\in D(A^*)$ and $z\in D(L)$, see \cite[Chapter 10]{TucsnakWeiss}. In the above expression, the operator $L$ is defined as
\begin{align*}
Lz &= -\frac{\d}{\d\zeta}(\lambda_0 z)\\
D(L) &= \{z\in X, \lambda_0z\in\H^1(0,1;\mathbb{C}^n),\\
&\hspace{2cm} 0 = [\begin{smallmatrix}I & 0\end{smallmatrix}]K\left[(\lambda_0z)(0) - A_d(\lambda_0z)(1)\right]\},
\end{align*}
which is the extension of $A$ to $D(L)$. The role of the matrix $[\begin{smallmatrix}I & 0\end{smallmatrix}]\in\mathbb{R}^{(n-p)\times n}$ in $D(L)$ is to keep the uncontrolled boundary conditions. The control inputs are placed in the operator $G$, given by $u(t) = Gz(t)$, that is,
\begin{equation}
Gz = -\left[\begin{smallmatrix}0 & I\end{smallmatrix}\right]K\left[(\lambda_0z)(0) - A_d(\lambda_0z)(1)\right], 
\label{OpG}
\end{equation}
for $z\in D(L)$, where \eqref{Input_New} has been used. Expanding \eqref{TucWei_AdjointB} for $\psi\in D(A^*)$ and $z\in D(L)$ yields that
\begin{align*}
(B^*\psi)^*\cdot(Gz) = -\int_0^1 \lambda_0\psi^*\frac{\d}{\d\zeta}(\lambda_0z)\d\zeta - \langle z,A^*\psi\rangle.
\end{align*}
An integration by parts has the consequence that
\begin{align*}
(B^*\psi)^*\cdot(Gz) = (\lambda_0\psi)(0)^*(\lambda_0z)(0) - (\lambda_0\psi)(1)^*(\lambda_0z)(1).
\end{align*}
By using the boundary conditions induced by $D(A^*)$, the previous equality may be written equivalently as
\begin{align*}
(B^*\psi)^*\cdot(Gz) = (\lambda_0\psi)(0)^*\left[(\lambda_0z)(0) - A_d(\lambda_0z)(1)\right].
\end{align*}
Going a step further implies that
\begin{align*}
&(B^*\psi)^*\cdot(Gz)\\
&= (\lambda_0\psi)(0)^*K^{-1}K\left[(\lambda_0z)(0) - A_d(\lambda_0z)(1)\right]\\
&=(\lambda_0\psi)(0)^*K^{-1}\left[\begin{matrix}[\begin{smallmatrix}I & 0\end{smallmatrix}]K\left[(\lambda_0z)(0) - A_d(\lambda_0z)(1)\right]\\
[\begin{smallmatrix}0 & I\end{smallmatrix}]K\left[(\lambda_0z)(0) - A_d(\lambda_0z)(1)\right]
\end{matrix}\right],
\end{align*}
where the matrices $[\begin{smallmatrix}I & 0\end{smallmatrix}]$ and $[\begin{smallmatrix}0 & I\end{smallmatrix}]$ in the last line are such that $[\begin{smallmatrix}I & 0\end{smallmatrix}]\in\mathbb{R}^{(n-p)\times n}$ and $[\begin{smallmatrix}0 & I\end{smallmatrix}]\in\mathbb{R}^{p\times n}$. Considering that $z\in D(L)$, there holds
\begin{align}
&(B^*\psi)^*\cdot(Gz)\nonumber\\
&=(\lambda_0\psi)(0)^*K^{-1}\left[\begin{matrix}0_{n-p}\\
[\begin{smallmatrix}0 & I\end{smallmatrix}]K\left[(\lambda_0z)(0) - A_d(\lambda_0z)(1)\right]
\end{matrix}\right],\label{IP}
\end{align}
where the notation $0_{n-p}$ stands for a zero column vector of size $n-p$. Since only the last $p$ components of the row vector $(\lambda_0\psi)(0)^*K^{-1}$ play a role in the inner product \eqref{IP}, we have that
\begin{align*}
&(B^*\psi)^*\cdot(Gz)\nonumber\\
&= (\lambda_0\psi)(0)^*K^{-1}\left[\begin{smallmatrix}0\\ I\end{smallmatrix}\right]\left[\begin{smallmatrix}0 & I\end{smallmatrix}\right]K\left[(\lambda_0z)(0) - A_d(\lambda_0z)(1)\right]\\
&= (\lambda_0\psi)(0)^*B_d\cdot Gz
\end{align*}
Since the operator $G$ is onto as an operator from $D(L)$ into $U$, we have that the equality 
\begin{align*}
(B^*\psi)^*\cdot u = (\lambda_0\psi)(0)^*B_d\cdot u
\end{align*}
has to hold for all $u\in U$. This has the consequence that 
\begin{equation}
B^*:D(A^*)\to\mathbb{C}^p,~ B^*\psi = B_d^*(\lambda_0\psi)(0).
\label{Adj_B}
\end{equation}
Note that with the boundary condition in $D(A^*)$ we could as well have it as an evaluation at $\zeta=1$.

With a perspective of solving the operator Riccati equation \eqref{Riccati_Unbounded}, we give also the expression for the operator $B_\omega^*$. In order to compute $B_\omega^*$, let us first have a look at the resolvent operator associated to $A^*$. For $g\in X$, it can be shown that
\begin{align*}
    &((sI-A^*)^{-1}g)(\zeta)\\
    = &~\frac{1}{\lambda_0(\zeta)}e^{-s p(1-\zeta)}A_d^*(I-e^{-sp(1)}A_d^*)^{-1}\int_0^1e^{-sp(\eta)}g(\eta)\d\eta\\
    &+\frac{1}{\lambda_0(\zeta)}\int_\zeta^1 e^{-sp(\eta-\zeta)}g(\eta)\d\eta,
\end{align*}
where the function $p:[0,1]\to\mathbb{R}^+$ is defined by 
\begin{equation*}
p(\zeta) := \int_0^\zeta\lambda_0(\eta)^{-1}\d\eta.
\end{equation*}

Now consider a function $g\in X$ that is such that $\lambda_0g$ admits a right limit at $0$. Applying the operator $B^*$ to $s(sI-A^*)^{-1}g$ and taking the limit for $s$ going to $\infty$ gives
\begin{align*}
    &\lim_{s\to\infty, s\in\mathbb{R}} B^*s(sI-A^*)^{-1}g\\
    &= \lim_{s\to\infty, s\in\mathbb{R}} B_d^* s(I-e^{-sp(1)}A_d^*)^{-1}\int_0^1 e^{-sp(\eta)}g(\eta)\d\eta\\
    &=\lim_{s\to\infty, s\in\mathbb{R}} B_d^* s\int_0^{p(1)} e^{-s\sigma}\lambda_0(p^{-1}(\sigma))g(p^{-1}(\sigma))\d\sigma,
\end{align*}
where $p^{-1}$ is the reciprocal function of $p$. In particular, we have that $p^{-1}(0) = 0$. As a consequence, by using the initial value theorem of the Laplace transform, see~\cite[Chap.~1]{Spiegel65}, there holds 
\begin{equation}
B_\omega^* g = B_d^*(\lambda_0g)(0^+).
\label{Yosida_Ext_B}
\end{equation}
Hence the domain of $B_\omega^*$ contains those $g$ for which $\lambda_0 g$ admits a right limit at $0$.

It remains to compute the operators $C$ and $D$. We start with the operator $D$. For this, observe that the transfer function of \eqref{StateEquation_Diag_Uniform}--\eqref{Output_Diag_Uniform} is given by 
\begin{equation}
\hat{\mathbf{G}}(s) = C_d(e^{sp(1)}-A_d)^{-1}B_d + D_d,
\label{eq:TrF}
\end{equation}
see \cite{HastirJacobZwart_Hinf}. According to \cite{WeissTRF}, the operator $D$ is defined as $Du = \lim_{s\in\mathbb{R},s\to\infty}\hat{\mathbf{G}}(s)$ provided that the limit exists, that is, provided that \eqref{StateEquation_Diag_Uniform}--\eqref{Output_Diag_Uniform} is a regular system. The regularity of the system is guaranteed thanks to \cite[Theorem 13.2.2]{JacobZwart}. Now observe that 
\begin{align*}
\lim_{s\in\mathbb{R},s\to\infty} \Vert \hat{\mathbf{G}}(s) - D_d\Vert = 0.
\end{align*}
As a consequence, the operator $D$ is expressed as 
\begin{equation}
D : U\to Y,~ Du = D_d u.
\label{OpD}
\end{equation}
Looking at \eqref{Output_New}, it is then natural to define the operator $C$ as
\begin{equation}
C: D(A)\to Y,~ Cz = C_d(\lambda_0z)(1).
\label{OpC}
\end{equation}

\subsection{LQ-optimal control}

We review some results presented in \cite{HastirJacobZwart_LQ} on the LQ-optimal control problem for \eqref{StateEquation_Diag_Uniform}--\eqref{Output_Diag_Uniform}. According to \cite{HastirJacobZwart_LQ}, the optimal control input that minimizes the cost \eqref{Cost} is given by the boundary feedback operator
\begin{equation}
u^{opt}(t) = F_d\lambda_0(1)z^{opt}(1,t),
\label{eq:OptimalFeedback}
\end{equation}
with $F_d := -P^{-1}V$, where $P := I + D_d^*D_d + B_d^*\Pi B_d$ and $V := D_d^*C_d + B_d^*\Pi A_d$, in which $\Pi$ is the smallest nonnegative solution of the following control algebraic Riccati equation (CARE)
\begin{equation}
A_d^*\Pi A_d - \Pi + C_d^*C_d = V^*P^{-1}V,
\label{CARE}
\end{equation}
and the matrices $A_d, B_d, C_d$ and $D_d$ are given \eqref{Matrices_Discrete}. Note that the solution $\Pi$ needs not to be unique. The uniqueness of $\Pi$ is guaranteed if the filter algebraic Riccati equation (FARE)
\begin{equation}
A_d\tilde{\Pi} A_d^* - \tilde{\Pi} + B_d^*B_d = WT^{-1}W^*,
\label{FARE}
\end{equation}
where $W := B_dD_d^* + A_d\tilde{\Pi} C_d^*$ and $T := I + D_dD_d^* + C_d\tilde{\Pi}C_d^*$, has a nonnegative self-adjoint solution and if that the matrix $A_\Pi := A_d + B_dF_d$ is stable in the sense $r(A_\Pi)<1$, $r$ being the spectral radius. In this case, the optimal cost is given by 
\begin{equation}
J(z_0,u^{opt}) = \langle z_0,\Pi z_0\rangle_X.
\label{OptimalCost}
\end{equation}

\section{Riccati equations and spectral factorization}\label{sec:Riccati_SpecFact}
Our aim in this section is to look for an infinite-dimensional operator Riccati equation that would define the optimal control operator given in \eqref{eq:OptimalFeedback}. 

Note that the notations $P := I + D_d^*D_d + B_d^*\Pi B_d$ and $V := D_d^*C_d + B_d^*\Pi A_d$ are used all along this section.

\subsection{Operator node Riccati equation}

The idea of using the concept of system nodes in Riccati equations appeared in \cite{Opmeer_Staffans} and was used again in \cite{Opmeer_MTNS}. We first recall the notion of a system node, see \cite[Definition 1.1.1]{Sta05book}.

\begin{definition}\label{def:SystemNode}
Let $X,U,Y$ be Banach spaces. An operator $S: D(S)\subset \left[\begin{smallmatrix}X\\ U\end{smallmatrix}\right]\to \left[\begin{smallmatrix}X\\ Y\end{smallmatrix}\right]$ is called a system node if 
\begin{itemize}
\item $S$ is a closed operator;
\item if we split $S = \left[\begin{smallmatrix}S_X\\ S_Y\end{smallmatrix}\right]$ in accordance with the splitting of the range space $\left[\begin{smallmatrix}X\\ Y\end{smallmatrix}\right]$, then $S_X$ with domain $D(S)$ is closed;
\item the operator $A$ defined by $Ax := S_X\left[\begin{smallmatrix}x\\ 0\end{smallmatrix}\right]$ with domain $D(A) = \{x\in X, \left[\begin{smallmatrix}x\\ 0\end{smallmatrix}\right]\in D(S)\}$ is the generator of a strongly continuous semigroup on $X$;
\item for every $u\in U$, there exists $x$ such that $\left[\begin{smallmatrix}x\\ u\end{smallmatrix}\right]\in D(S)$.
\end{itemize}
\end{definition}
We use the notations $S_X =: A\& B$ and $S_Y =: C\& D$ in what follows. 
By considering \eqref{Input_New} and \eqref{Output_New}, the system node associated to \eqref{StateEquation_Diag_Uniform}--\eqref{Output_Diag_Uniform} is given by 
\begin{align}
S\left[\begin{matrix}z\\ u\end{matrix}\right] &= \left[\begin{matrix}A\& B\\ C\& D\end{matrix}\right]\left[\begin{matrix}z\\ u\end{matrix}\right] = \left[\begin{matrix}-\frac{\d}{\d\zeta}(\lambda_0 z)\\ C_d\lambda_0(1)z(1) + D_d u\end{matrix}\right]\label{Op(S)}\\
D(S) &= \left\{\left[\begin{matrix}z\\ u\end{matrix}\right]\in\left[\begin{matrix}X\\ \mathbb{C}^p\end{matrix}\right], \frac{\d}{\d\zeta}(\lambda_0 z)\in X,\right.\nonumber\\
&\hspace{1cm}\left.\lambda_0(0)z(0) - A_d\lambda_0(1)z(1) = B_d u\right\}\label{D(S)}.
\end{align}

It can be shown that \eqref{Op(S)}--\eqref{D(S)} defines a system node as described in Definition \ref{def:SystemNode}. The proof is omitted here.

The notion of operator node Riccati equation is given in the following definition, see \cite{Opmeer_MTNS}, \cite{Opmeer_Staffans}.

\begin{definition}
Let $S$ be a system node and $R = R^*, R>0$. The operators $Z = Z^*\in\mathcal{L}(X)$ and $K\& L: D(S)\to U$ are called a solution of the operator node Riccati equation for $S$ if
\begin{align}
&\left\langle A\& B\left[\begin{matrix}z\\ u\end{matrix}\right], Zz\right\rangle + \left\langle Zz, A\& B\left[\begin{matrix}z\\ u\end{matrix}\right]\right\rangle + \left\Vert C\& D\left[\begin{matrix}z\\ u\end{matrix}\right]\right\Vert^2 + \left\langle Ru, u\right\rangle\nonumber\\
&\hspace{5cm} = \left\Vert K\& L\left[\begin{matrix}z\\ u\end{matrix}\right]\right\Vert^2
\label{ON_Riccati}
\end{align}
for all $\left[\begin{smallmatrix}z\\ u\end{smallmatrix}\right]\in D(S)$. The operator $Z$ satisfies
\begin{align*}
\langle Zz_0,z_0\rangle &= \inf_{u\in\L^2(0,\infty;\mathbb{C}^p)} \int_0^\infty \langle Ru(t),u(t)\rangle_U + \Vert y(t)\Vert^2_Y\d t,
\end{align*}
where $z_0$ is the initial condition.
\end{definition}

The solution of the operator node Riccati equation \eqref{ON_Riccati} is given in the following theorem.

\begin{theorem}
Let $S$ be the system node defined in \eqref{Op(S)}--\eqref{D(S)}, $\Pi$ be the nonnegative solution of the CARE \eqref{CARE} and $R = I_U$. Then the operators $Z = \Pi I_X$ and $K\& L: D(S)\to U$,
\begin{equation*}
K\& L\left[\begin{smallmatrix}z\\ u\end{smallmatrix}\right] = P^{-\frac{1}{2}}V\lambda_0(1)z(1) + P^{\frac{1}{2}} u,
\end{equation*}
are the solution of the operator node Riccati equation \eqref{ON_Riccati}.
\end{theorem}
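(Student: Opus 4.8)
The plan is to verify the identity \eqref{ON_Riccati} by direct substitution of the candidate operators, which reduces the whole statement to the matrix equation \eqref{CARE}. Fix $\left[\begin{smallmatrix}z\\ u\end{smallmatrix}\right]\in D(S)$ and put $w:=\lambda_0 z$; by \eqref{D(S)} this means $w\in\H^1(0,1;\mathbb{C}^n)$ and $w(0) = A_d w(1) + B_d u$. Writing $\xi:=w(1)=\lambda_0(1)z(1)$, we have $A\& B\left[\begin{smallmatrix}z\\ u\end{smallmatrix}\right]=-w'$, $C\& D\left[\begin{smallmatrix}z\\ u\end{smallmatrix}\right]=C_d\xi+D_d u$, $Zz=\Pi z$ pointwise, and $K\& L\left[\begin{smallmatrix}z\\ u\end{smallmatrix}\right]=P^{-\frac{1}{2}}V\xi+P^{\frac{1}{2}}u$.

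First I would compute the two inner-product terms on the left of \eqref{ON_Riccati}. Using $\langle f,g\rangle_X=\int_0^1 g^*\lambda_0 f\,\d\zeta$ together with $\lambda_0 z=w$ and $\Pi=\Pi^*$, both terms combine into an exact derivative, so that
\[
\langle -w',\Pi z\rangle_X+\langle \Pi z,-w'\rangle_X=-\int_0^1\frac{\d}{\d\zeta}\big(w^*\Pi w\big)\,\d\zeta=w(0)^*\Pi w(0)-\xi^*\Pi\xi ,
\]
the boundary evaluation being legitimate because $w\in\H^1(0,1;\mathbb{C}^n)$ is continuous. Next I would substitute $w(0)=A_d\xi+B_d u$, expand this together with $\Vert C_d\xi+D_d u\Vert^2$ and $\langle Ru,u\rangle=\Vert u\Vert^2$, and collect terms. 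Using $V=D_d^*C_d+B_d^*\Pi A_d$ and $P=I+D_d^*D_d+B_d^*\Pi B_d$, the left-hand side of \eqref{ON_Riccati} becomes
\[
\xi^*\big(A_d^*\Pi A_d-\Pi+C_d^*C_d\big)\xi+\xi^*V^*u+u^*V\xi+u^*Pu .
\]
Invoking the CARE \eqref{CARE} to replace the first bracket by $V^*P^{-1}V$ then gives $\xi^*V^*P^{-1}V\xi+\xi^*V^*u+u^*V\xi+u^*Pu$ for the left-hand side.

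The final step is to expand the right-hand side: since $P>0$, both $P^{\frac{1}{2}}$ and $P^{-\frac{1}{2}}$ are self-adjoint, and $\Vert P^{-\frac{1}{2}}V\xi+P^{\frac{1}{2}}u\Vert^2$ equals exactly $\xi^*V^*P^{-1}V\xi+\xi^*V^*u+u^*V\xi+u^*Pu$, which matches the left-hand side; hence \eqref{ON_Riccati} holds. To conclude, one checks the remaining items of the definition: $Z=\Pi I_X$ is bounded and, since $\Pi=\Pi^*\in\mathbb{C}^{n\times n}$ and $\lambda_0$ is scalar, self-adjoint on $X$; $K\& L$ indeed maps $D(S)$ into $U=\mathbb{C}^p$ because the trace $z\mapsto\lambda_0(1)z(1)$ is well defined on $D(S)$; and the cost characterisation of $Z$ follows from $\langle Zz_0,z_0\rangle_X=\langle z_0,\Pi z_0\rangle_X$ together with the optimal-cost formula \eqref{OptimalCost}.

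I do not expect a genuine obstacle here: once the substitution is made the computation is essentially forced. The only points needing care are the bookkeeping of the weight $\lambda_0$ in $\langle\cdot,\cdot\rangle_X$ against the auxiliary variable $w=\lambda_0 z$ — it is precisely this that makes the two inner-product terms collapse to a clean boundary term — and the justification of the integration by parts and the boundary evaluations, which is guaranteed by $\left[\begin{smallmatrix}z\\ u\end{smallmatrix}\right]\in D(S)$ forcing $\lambda_0 z\in\H^1(0,1;\mathbb{C}^n)$.
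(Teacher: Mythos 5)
Your proposal is correct and follows essentially the same route as the paper's proof: integrate by parts to reduce the quadratic form to boundary terms, substitute the boundary condition $(\lambda_0 z)(0)=A_d(\lambda_0 z)(1)+B_d u$ from $D(S)$, collect terms into $A_d^*\Pi A_d-\Pi+C_d^*C_d$, $V$ and $P$, and invoke the CARE \eqref{CARE} to complete the square. The only (harmless) differences are cosmetic: you package the integration by parts as an exact derivative of $w^*\Pi w$ and additionally record the routine checks that $Z$ is self-adjoint and that $K\& L$ is well defined on $D(S)$.
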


\begin{proof}
Take $\left[\begin{smallmatrix}z\\ u\end{smallmatrix}\right]\in D(S)$. There holds
\begin{align*}
&\left\langle A\& B\left[\begin{matrix}z\\ u\end{matrix}\right], Z z\right\rangle + \left\langle Z z, A\& B\left[\begin{matrix}z\\ u\end{matrix}\right]\right\rangle\\
&= - \int_0^1 z^*\Pi\lambda_0\frac{\d}{\d\zeta}\left(\lambda_0 z\right)\d\zeta - \int_0^1 \frac{\d}{\d\zeta}\left(\lambda_0 z\right)^*\Pi\lambda_0 z\d\zeta.
\end{align*}
An integration by parts implies that
\begin{align*}
&\left\langle A\& B\left[\begin{matrix}z\\ u\end{matrix}\right], Z z\right\rangle + \left\langle Z z, A\& B\left[\begin{matrix}z\\ u\end{matrix}\right]\right\rangle\\
&= -(\lambda_0z)(1)^*\Pi(\lambda_0z)(1) + (\lambda_0z)(0)^*\Pi(\lambda_0z)(0).
\end{align*}
By using \eqref{D(S)}, we get
\begin{align*}
&\left\langle A\& B\left[\begin{matrix}z\\ u\end{matrix}\right], Zz\right\rangle + \left\langle Zz, A\& B\left[\begin{matrix}z\\ u\end{matrix}\right]\right\rangle\\
= &~-(\lambda_0z)(1)^*\Pi (\lambda_0z)(1)\\
& + (B_du + A_d(\lambda_0z)(1))^*\Pi (B_du + A_d(\lambda_0z)(1))\\
= &~(\lambda_0z)(1)^*\left[A_d^*\Pi A_d-\Pi \right](\lambda_0z)(1) + u^*B_d^*\Pi B_du\\
&+ u^*B_d^*\Pi A_d(\lambda_0z)(1) + (\lambda_0z)(1)^*A_d^*\Pi B_du.
\end{align*}
In addition, we have that
\begin{align*}
&\left\Vert C\& D\left[\begin{matrix}z\\ u\end{matrix}\right]\right\Vert^2 + \Vert u\Vert^2\\
= &~\left(C_d(\lambda_0z)(1) + D_du\right)^*\left(C_d(\lambda_0z)(1) + D_du\right) + u^*u\\
= &~(\lambda_0z)(1)^*C_d^*C_d(\lambda_0z)(1) + (\lambda_0z)(1)^*C_d^*D_du\\
&+ u^*D_d^*C_d(\lambda_0z)(1) + u^*\left[D_d^*D_d+I\right]u.
\end{align*}
As a consequence, there holds
\begin{align*}
&\left\langle A\& B\left[\begin{matrix}z\\ u\end{matrix}\right], Zz\right\rangle + \left\langle Zz, A\& B\left[\begin{matrix}z\\ u\end{matrix}\right]\right\rangle + \left\Vert C\& D\left[\begin{matrix}z\\ u\end{matrix}\right]\right\Vert^2 + \Vert u\Vert^2\\
= &(\lambda_0z)(1)^*\left[A_d^*\Pi A_d-\Pi  + C_d^*C_d\right](\lambda_0z)(1)\\
&+ u^*Pu + (\lambda_0z)(1)^*V^*u + u^*V(\lambda_0z)(1).
\end{align*}
Using the CARE \eqref{CARE} implies that
\begin{align*}
&\left\langle A\& B\left[\begin{matrix}z\\ u\end{matrix}\right], Zz\right\rangle + \left\langle Zz, A\& B\left[\begin{matrix}z\\ u\end{matrix}\right]\right\rangle + \left\Vert C\& D\left[\begin{matrix}z\\ u\end{matrix}\right]\right\Vert^2 + \Vert u\Vert^2\\
= &~(\lambda_0z)(1)^*\left[V^*P^{-1}V\right](\lambda_0z)(1)\\
&+ u^*Pu+ (\lambda_0z)(1)^*V^*u + u^*V(\lambda_0z)(1)\\
= &~\left\Vert P^{-\frac{1}{2}}V(\lambda_0z)(1) + P^\frac{1}{2}u\right\Vert^2\\
= &~\left\Vert K \& L\left[\begin{matrix}z\\ u\end{matrix}\right]\right\Vert^2.\qedhere
\end{align*}
\end{proof}
As is mentioned in \cite[Section 5]{Opmeer_Staffans}, the optimal control $u^{opt}$ is the solution of $K \& L\left[\begin{smallmatrix}z^{opt}\\u^{opt}\end{smallmatrix}\right] = 0$. This is equivalent to 
\begin{align*}
u^{opt} &= -P^{-1}V(\lambda_0z^{opt})(1) = F_d(\lambda_0z^{opt})(1),
\end{align*}
which coincides with \eqref{eq:OptimalFeedback}.

\subsection{\textit{Weiss--Weiss} operator Riccati equation and spectral factorization}

We start this section by giving a function $\boldsymbol{\chi}$ that formally satisfies \eqref{SpecFact_Popov}. However, without adding additional assumption, the obtained $\boldsymbol{\chi}$ may not be called a spectral factor because the conditions $\boldsymbol{\chi}\in\mathbf{H}^\infty(\mathcal{L}(U))$ and $\boldsymbol{\chi}^{-1}\in\mathbf{H}^\infty(\mathcal{L}(U))$ are not necessarily satisfied. 

The following proposition gives a candidate for a spectral factor of the Popov function associated to the system node $S$ defined in \eqref{Op(S)}--\eqref{D(S)}. 

\begin{proposition}
The function $\boldsymbol{\chi}$ defined by
\begin{align*}
\boldsymbol{\chi}(s) = P^\frac{1}{2}\left[I - F_d(e^{s p(1)}-A_d)^{-1}B_d\right]
\end{align*}
satisfies formally $\boldsymbol{\chi}(-\overline{s})^*\boldsymbol{\chi}(s) = I + \hat{\mathbf{G}}(-\overline{s})^*\hat{\mathbf{G}}(s)$, where $\hat{\mathbf{G}}(s)$ is the transfer function associated to \eqref{StateEquation_Diag_Uniform}--\eqref{Output_Diag_Uniform}, whose expression is given in \eqref{eq:TrF}.
\end{proposition}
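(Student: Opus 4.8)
The plan is to prove the displayed formula as a purely algebraic identity in the matrix variable $\sigma:=e^{sp(1)}$, valid whenever $\sigma\notin\sigma(A_d)$ so that all the inverses below exist; no analytic input (boundedness, invertibility of $\boldsymbol{\chi}$) is needed since the statement is only formal. The first observation is that $p(1)=\int_0^1\lambda_0(\eta)^{-1}\d\eta$ is real, so evaluating $\boldsymbol{\chi}$ at $-\overline{s}$ and then taking the operator adjoint turns the scalar $e^{sp(1)}$ into $\overline{e^{-\overline{s}p(1)}}=e^{-sp(1)}=\sigma^{-1}$ and conjugate-transposes the constant matrices. Writing $R:=(\sigma I-A_d)^{-1}$ and $\widetilde R:=(\sigma^{-1}I-A_d^*)^{-1}$, one gets $\boldsymbol{\chi}(s)=P^{1/2}(I-F_dRB_d)$, $\boldsymbol{\chi}(-\overline{s})^*=(I-B_d^*\widetilde R F_d^*)P^{1/2}$, $\hat{\mathbf{G}}(s)=C_dRB_d+D_d$ and $\hat{\mathbf{G}}(-\overline{s})^*=B_d^*\widetilde R C_d^*+D_d^*$ (here one must keep $\widetilde R$ distinct from $R^*$, the two coinciding only when $|\sigma|=1$, i.e.\ on the imaginary axis, which is exactly the Popov-function case of \eqref{eq:Popov}).

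Next I would expand the product $\boldsymbol{\chi}(-\overline{s})^*\boldsymbol{\chi}(s)=(I-B_d^*\widetilde R F_d^*)P(I-F_dRB_d)$, using that $P^{1/2}$ is self-adjoint. Substituting $F_d=-P^{-1}V$ gives $PF_d=-V$, $F_d^*P=-V^*$, $F_d^*PF_d=V^*P^{-1}V$, so the product becomes $P+VRB_d+B_d^*\widetilde R V^*+B_d^*\widetilde R(V^*P^{-1}V)RB_d$, and the CARE \eqref{CARE} replaces $V^*P^{-1}V$ by $A_d^*\Pi A_d-\Pi+C_d^*C_d$. On the other side, $I+\hat{\mathbf{G}}(-\overline{s})^*\hat{\mathbf{G}}(s)=I+D_d^*D_d+D_d^*C_dRB_d+B_d^*\widetilde R C_d^*D_d+B_d^*\widetilde R C_d^*C_dRB_d$. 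Now I substitute $P=I+D_d^*D_d+B_d^*\Pi B_d$ and $V=D_d^*C_d+B_d^*\Pi A_d$ into the left-hand side and subtract the right-hand side; the terms $I$, $D_d^*D_d$, $D_d^*C_dRB_d$, $B_d^*\widetilde R C_d^*D_d$ and $B_d^*\widetilde R C_d^*C_dRB_d$ cancel, and everything collapses to the claim that
\[
B_d^*\bigl(\Pi+\Pi A_dR+\widetilde R A_d^*\Pi+\widetilde R(A_d^*\Pi A_d-\Pi)R\bigr)B_d=0 .
\]

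The only genuinely substantive step is to show that the operator inside the parentheses vanishes identically. For this I would use the resolvent identities $A_dR=\sigma R-I$ and $\widetilde R A_d^*=\sigma^{-1}\widetilde R-I$ (both instances of $M(\mu I-M)^{-1}=\mu(\mu I-M)^{-1}-I$), together with $\widetilde R(A_d^*\Pi A_d)R=(\widetilde R A_d^*)\Pi(A_dR)=(\sigma^{-1}\widetilde R-I)\Pi(\sigma R-I)$. Expanding all four summands and grouping by the monomials $\Pi$, $\sigma\,\Pi R$, $\sigma^{-1}\widetilde R\,\Pi$ and $\widetilde R\,\Pi R$, one checks that each of the four groups cancels ($1-1-1+1=0$, $1-1=0$, $1-1=0$, $1-1=0$ respectively), so the bracket is $0$ and the identity $\boldsymbol{\chi}(-\overline{s})^*\boldsymbol{\chi}(s)=I+\hat{\mathbf{G}}(-\overline{s})^*\hat{\mathbf{G}}(s)$ follows.

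I expect the main obstacle to be bookkeeping rather than mathematical difficulty: one has to be scrupulous that the para-conjugation $s\mapsto-\overline{s}$ composed with the operator adjoint really sends $e^{sp(1)}$ to its reciprocal (this is where $p(1)\in\mathbb{R}$ enters) and that $\widetilde R$ is treated as $(\sigma^{-1}I-A_d^*)^{-1}$ and not confused with $R^*$ off the imaginary axis. Once the notation is fixed, the remainder is routine matrix algebra driven entirely by the CARE \eqref{CARE} and the resolvent identity.
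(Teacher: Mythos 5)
Your proposal is correct and follows essentially the same route as the paper: expand $\boldsymbol{\chi}(-\overline{s})^*\boldsymbol{\chi}(s)$, substitute $F_d=-P^{-1}V$, invoke the CARE \eqref{CARE} to replace $V^*P^{-1}V$, match the transfer-function terms, and show the leftover $\Pi$-expression vanishes. The only (immaterial) difference is in that last step, where you cancel via the resolvent identity $A_dR=\sigma R-I$ while the paper conjugates the bracket by the two resolvents and expands; your explicit remarks that $p(1)\in\mathbb{R}$ and that $\widetilde R\neq R^*$ off the unit circle are correct and, if anything, slightly more careful than the paper's presentation.
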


\begin{proof}
First observe that the adjoint of $\chi$ is given by
\begin{align*}
    \boldsymbol{\chi}(s)^* = \left[I - B_d^*(e^{\overline{s}p(1)}-A_d^*)^{-1}F_d^*\right]P^\frac{1}{2}.
\end{align*}
This implies that
\begin{align*}
    &\boldsymbol{\chi}(-\overline{s})^*\boldsymbol{\chi}(s)\\
    = &\left[I - B_d^*(e^{-sp(1)}-A_d^*)^{-1}F_d^*\right]P\left[I - F_d(e^{sp(1)}-A_d)^{-1}B_d\right]\\
    = &~P + B_d^*\left(e^{-sp(1)}-A_d^*\right)^{-1}V^* + V\left(e^{sp(1)}-A_d\right)^{-1}B_d\\
    &+ B_d^*\left(e^{-sp(1)}-A_d^*\right)^{-1}V^*P^{-1}V\left(e^{sp(1)}-A_d\right)^{-1}B_d,
\end{align*}
in which the equality $F_d = -P^{-1}V$, see \eqref{eq:OptimalFeedback}, has been used. Thanks to \eqref{CARE}, the previous equality may be re written as
\begin{align*}
    &\boldsymbol{\chi}(-\overline{s})^*\boldsymbol{\chi}(s)\\
    = &~I + D_d^*D_d + B_d^*\Pi B_d + B_d^*\left(e^{-sp(1)}-A_d^*\right)^{-1}C_d^*D_d\\
    &+ B_d^*\left(e^{-sp(1)}-A_d^*\right)^{-1}A_d^*\Pi B_d + D_d^*C_d\left(e^{sp(1)}-A_d\right)^{-1}B_d\\
    &+ B_d^*\Pi A_d\left(e^{sp(1)}-A_d\right)^{-1}B_d\\
    &+B_d^*\left(e^{-sp(1)}-A_d^*\right)^{-1}A_d^*\Pi A_d\left(e^{sp(1)}-A_d\right)^{-1}B_d\\
    &- B_d^*\left(e^{-sp(1)}-A_d^*\right)^{-1}\Pi \left(e^{sp(1)}-A_d\right)^{-1}B_d\\
    &+B_d^*\left(e^{-sp(1)}-A_d^*\right)^{-1}C_d^*C_d\left(e^{sp(1)}-A_d\right)^{-1}B_d.
\end{align*}
According to the expression of the transfer function \eqref{eq:TrF}, there holds
\begin{align*}
    &\boldsymbol{\chi}(-\overline{s})^*\boldsymbol{\chi}(s)\\
    =&~ I + \hat{\mathbf{G}}(-\overline{s})^*\hat{\mathbf{G}}(s)\\
    & + B_d^*\left[\Pi  + \left(e^{-sp(1)}-A_d^*\right)^{-1}A_d^*\Pi + \Pi A_d\left(e^{sp(1)}-A_d\right)^{-1}\right.\\
    & + \left(e^{-sp(1)}-A_d^*\right)^{-1}A_d^*\Pi A_d\left(e^{sp(1)}-A_d\right)^{-1}\\
    &\left.- \left(e^{-sp(1)}-A_d^*\right)^{-1}\Pi \left(e^{sp(1)}-A_d\right)^{-1}\right]B_d.
\end{align*}
Observe now that 
\begin{align*}
&\Pi  + \left(e^{-sp(1)}-A_d^*\right)^{-1}A_d^*\Pi + \Pi A_d\left(e^{sp(1)}-A_d\right)^{-1}\\
&+ \left(e^{-sp(1)}-A_d^*\right)^{-1}A_d^*\Pi A_d\left(e^{sp(1)}-A_d\right)^{-1}\\
&- \left(e^{-sp(1)}-A_d^*\right)^{-1}\Pi \left(e^{sp(1)}-A_d\right)^{-1}\\
=&~\left(e^{-sp(1)}-A_d^*\right)^{-1}\left[\left(e^{-sp(1)}-A_d^*\right)\Pi\left(e^{sp(1)}-A_d\right)\right.\\
&+ A_d^*\Pi\left(e^{sp(1)}-A_d\right) + \left(e^{-sp(1)}-A_d^*\right)\Pi A_d\\
&\left.+ A_d^*\Pi A_d - \Pi\right]\left(e^{sp(1)}-A_d\right)^{-1}\\
=&~0,
\end{align*}
which concludes the proof.
\end{proof}

\begin{remark}\label{rem:SpecFact}
To call $\boldsymbol{\chi}$ a spectral factor, it should satisfy $\boldsymbol{\chi}\in\mathbf{H}^\infty(\mathcal{L}(U))$ and $\boldsymbol{\chi}^{-1}\in\mathbf{H}^\infty(\mathcal{L}(U))$ besides the equality \eqref{SpecFact_Popov}. With our assumptions, this is not guaranteed a priori. An additional condition that is needed to call $\boldsymbol{\chi}$ a spectral factor is the coercivity of the Popov function $\boldsymbol{\Phi}(i\omega)$. It is easy to see that for all $u\in U$ and all $\omega\in\mathbb{R}$, there holds
\begin{align*}
\langle u,\boldsymbol{\Phi}(i\omega)u\rangle_U &= \langle u,(I + \hat{\mathbf{G}}(i\omega)^*\hat{\mathbf{G}}(i\omega))u\rangle_U\\
&=\Vert u\Vert^2_U + \Vert \hat{\mathbf{G}}(i\omega)u\Vert^2\\
&\geq \Vert u\Vert_U^2,
\end{align*}
which shows that $\mathbf{\Phi}(i\omega)$ is coercive. What will be important to define another form of the Riccati equation needed for the LQ-optimal control problem is the coercivity of $\boldsymbol{\Phi}$ and the regularity of $\boldsymbol{\chi}$ viewed as a transfer function of a system node. By noting that
\begin{align*}
\boldsymbol{\chi}(s) &= P^\frac{1}{2} + P^{-\frac{1}{2}}V\left(e^{sp(1)}-A_d\right)^{-1}B_d,
\end{align*}
it can be seen that the $\boldsymbol{\chi}$ is the transfer function of the system node $S_{sp} := \left[\begin{smallmatrix}A\& B\\ K \& L\end{smallmatrix}\right], D(S_{sp}) = D(S)$. Moreover, both the system nodes $S$ and $S_{sp}$ are regular, with $\lim_{s\to\infty, s\in\mathbb{R}} \hat{\mathbf{G}}(s) = D_d$ and $\lim_{s\to\infty, s\in\mathbb{R}} \boldsymbol{\chi}(s) = P^\frac{1}{2} =: \Omega$. In particular, there holds $\Omega^*\Omega \neq I + D^*_dD_d$, which has been highlighted in Section \ref{sec:Intro}.
\end{remark}

A Riccati equation is given in the next theorem. There, the transfer function $\boldsymbol{\chi}$ of $S_{sp}$, and in particular its limit, given by $\Omega = P^{\frac{1}{2}}$, plays an important role.

\begin{theorem}\label{thm:Riccati}
Let $Z = \Pi I_X$ with $\Pi $ a solution of the CARE \eqref{CARE}. Moreover, let $\Omega = P^{\frac{1}{2}}$
and $B_{\omega}^*$ be the weak Yosida extension of $B^*$, defined in \eqref{Yosida_Ext_B}. Then, the following operator Riccati equation
\begin{align}
&\langle Az,Zz\rangle + \langle Zz,Az\rangle + \langle Cz,Cz\rangle\nonumber\\
&= \langle (\Omega^*\Omega)^{-1}(B_\omega^*Z + D^*C)z,(B_\omega^*Z + D^*C)z\rangle
\label{eq:Riccati}
\end{align}
holds for all $z\in D(A)$, where $A$ and $B$, see \eqref{Op(A)}--\eqref{D(A)} and \eqref{Adj_B}, stand for the operator dynamics and the control operator associated to \eqref{StateEquation_Diag_Uniform}--\eqref{Output_Diag_Uniform}, respectively, while the operators $C$ and $D$, see \eqref{OpC} and \eqref{OpD}, are the output and the feedthrough operators associated to \eqref{StateEquation_Diag_Uniform}--\eqref{Output_Diag_Uniform}, respectively.
\end{theorem}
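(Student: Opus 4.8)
The plan is to verify \eqref{eq:Riccati} by a direct computation, exploiting the fact that for $z \in D(A)$ the quantities $B_\omega^* Z z$, $Cz$ and the left-hand side all reduce to boundary evaluations, after which the identity collapses to the finite-dimensional CARE \eqref{CARE}. First I would take $z \in D(A)$, so that $\lambda_0 z \in \H^1(0,1;\mathbb{C}^n)$ with $(\lambda_0 z)(0) = A_d (\lambda_0 z)(1)$, and compute the left-hand side. Since $Z = \Pi I_X$ and $Az = -\frac{\d}{\d\zeta}(\lambda_0 z)$, an integration by parts — exactly as in the proof of the operator node Riccati theorem above — gives
\[
\langle Az, Zz\rangle + \langle Zz, Az\rangle = (\lambda_0 z)(0)^*\Pi(\lambda_0 z)(0) - (\lambda_0 z)(1)^*\Pi(\lambda_0 z)(1).
\]
Using the boundary condition $(\lambda_0 z)(0) = A_d(\lambda_0 z)(1)$ from \eqref{D(A)}, this becomes $(\lambda_0 z)(1)^*\bigl[A_d^*\Pi A_d - \Pi\bigr](\lambda_0 z)(1)$. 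Adding $\langle Cz, Cz\rangle = (\lambda_0 z)(1)^* C_d^* C_d (\lambda_0 z)(1)$ from \eqref{OpC}, the left-hand side equals $(\lambda_0 z)(1)^*\bigl[A_d^*\Pi A_d - \Pi + C_d^* C_d\bigr](\lambda_0 z)(1)$.

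Next I would evaluate the right-hand side. Here the key point is that for $z \in D(A)$, $\lambda_0 z$ is continuous on $[0,1]$, so $(\lambda_0 z)(0^+) = (\lambda_0 z)(0) = A_d(\lambda_0 z)(1)$, and hence from \eqref{Yosida_Ext_B},
\[
B_\omega^* Z z = B_\omega^*(\Pi z) = B_d^*\,(\lambda_0 \Pi z)(0^+) = B_d^*\Pi(\lambda_0 z)(0) = B_d^*\Pi A_d(\lambda_0 z)(1),
\]
using that $\Pi$ commutes with $\lambda_0$ (it is a constant matrix acting pointwise, while $\lambda_0(\zeta)$ is scalar-valued, or more precisely one checks $\lambda_0 \Pi z = \Pi \lambda_0 z$ since $\lambda_0$ is scalar). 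Meanwhile $D^* C z = D_d^* C_d (\lambda_0 z)(1)$ by \eqref{OpD} and \eqref{OpC}. Therefore
\[
(B_\omega^* Z + D^* C)z = \bigl(B_d^*\Pi A_d + D_d^* C_d\bigr)(\lambda_0 z)(1) = V(\lambda_0 z)(1),
\]
with $V = D_d^* C_d + B_d^*\Pi A_d$ as defined in the paper. Since $\Omega^*\Omega = P$, the right-hand side of \eqref{eq:Riccati} equals $(\lambda_0 z)(1)^* V^* P^{-1} V (\lambda_0 z)(1)$. Equating the two sides reduces \eqref{eq:Riccati} to the requirement $A_d^*\Pi A_d - \Pi + C_d^* C_d = V^* P^{-1} V$ tested against the vector $(\lambda_0 z)(1) \in \mathbb{C}^n$, which is precisely the CARE \eqref{CARE}, and the proof concludes.

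I do not anticipate a serious obstacle — the computation is essentially the finite-dimensional one wrapped in two integrations by parts — but the step requiring the most care is the evaluation of $B_\omega^* Z z$: one must be sure that $z \in D(A)$ indeed puts $\lambda_0 z$ (hence $\lambda_0 \Pi z$) in a class where the right limit at $0$ exists and equals the value $(\lambda_0 z)(0)$, and that applying $B_\omega^*$ to $\Pi z$ is legitimate, i.e.\ that $\Pi z$ lies in the domain of $B_\omega^*$ described after \eqref{Yosida_Ext_B}. Since $z \in D(A)$ gives $\lambda_0 z \in \H^1(0,1;\mathbb{C}^n) \hookrightarrow C([0,1];\mathbb{C}^n)$ and $\Pi$ is a bounded constant matrix commuting with the scalar $\lambda_0$, this is immediate, but it is the one place where the unboundedness of $B^*$ genuinely enters and where the weak Yosida extension \eqref{Yosida_Ext_B} — rather than $B^*$ itself — must be invoked, matching the discussion in Section~\ref{sec:Intro} of why \eqref{Riccati_Unbounded} is the appropriate formulation.
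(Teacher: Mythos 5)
Your proposal is correct and follows essentially the same route as the paper's own proof: integrate by parts on the left-hand side, use the boundary condition $(\lambda_0 z)(0)=A_d(\lambda_0 z)(1)$ to reduce both sides to quadratic forms in $(\lambda_0 z)(1)$, identify $(B_\omega^* Z + D^*C)z = V(\lambda_0 z)(1)$ and $\Omega^*\Omega = P$, and invoke the CARE \eqref{CARE}. Your extra care in justifying that $\Pi z$ lies in the domain of $B_\omega^*$ (via $\lambda_0 z\in\H^1\hookrightarrow C[0,1]$ and the commutation of the constant matrix $\Pi$ with the scalar $\lambda_0$) is a welcome addition that the paper leaves implicit, but it does not change the argument.
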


\begin{proof}
    Consider $z\in D(A)$. Expanding the left-hand side of \eqref{eq:Riccati} yields that
    \begin{align*}
        &\langle Az,Zz\rangle + \langle Zz,Az\rangle + \langle Cz,Cz\rangle\\
        =&~ -\int_0^1 z^*\lambda_0\Pi \frac{\d}{\d\zeta}(\lambda_0 z)\d\zeta - \int_0^1\frac{\d}{\d\zeta}(\lambda_0z^*)\lambda_0\Pi z\d\zeta\\
        & + (\lambda_0z)(1)^*C_d^*C_d(\lambda_0z)(1).
    \end{align*}
    An integration by parts implies that
    \begin{align*}
        &\langle Az,Zz\rangle + \langle Zz,Az\rangle + \langle Cz,Cz\rangle\\
        =&~ (\lambda_0z)(0)^*\Pi (\lambda_0z)(0) - (\lambda_0z)(1)^*\Pi (\lambda_0z)(1)\\
        & + (\lambda_0z)(1)^*C_d^*C_d(\lambda_0z)(1).
    \end{align*}
    By using $z\in D(A)$, see \eqref{D(A)}, we get
    \begin{align}
        &\langle Az,Zz\rangle + \langle Zz,Az\rangle + \langle Cz,Cz\rangle\nonumber\\
        &= (\lambda_0z)(1)^*\left[A_d^*\Pi A_d - \Pi +C_d^*C_d\right](\lambda_0z)(1).
        \label{LHS_Riccati}
    \end{align}
    Applying the definition of $\Omega$, the boundary conditions for $z\in D(A)$, see \eqref{D(A)}, and by noting that $(B_\omega^*Z + D^*C)z = B_d^*\Pi (\lambda_0z)(0) + D_d^*C_d(\lambda_0z)(1)$, there holds
    \begin{align}
        &\langle (\Omega^*\Omega)^{-1}(B_\omega^*Z + D^*C)z,(B_\omega^*Z + D^*C)z\rangle\nonumber\\
        &=(\lambda_0z)(1)^*\left[V^*P^{-1}V\right](\lambda_0z)(1).
        \label{RHS_Riccati}
    \end{align}
    Since $\Pi $ satisfies the Riccati equation \eqref{CARE} by assumption, we get the equality between \eqref{LHS_Riccati} and \eqref{RHS_Riccati}.
\end{proof}

\section{An example highlighting the importance of \texorpdfstring{\eqref{Riccati_Unbounded}}{} for solving the LQ-optimal control problem for \texorpdfstring{\eqref{StateEquation_Diag_Uniform}--\eqref{Output_Diag_Uniform}}{}}\label{sec:CE}
In this section, we highlight the importance of considering \eqref{Riccati_Unbounded} instead of \eqref{Riccati_WeakForm} when solving the LQ-optimal optimal control for \eqref{StateEquation_Diag_Uniform}--\eqref{Output_Diag_Uniform}. Although a similar kind of approach has already been used in \cite{Weiss_Zwart}, the example presented here fits the class \eqref{StateEquation_Diag_Uniform}--\eqref{Output_Diag_Uniform}. This is not the case of the example given in \cite{Weiss_Zwart}, in which the observation operator is bounded. We consider the following transport equation
\begin{equation}
\frac{\partial z}{\partial t}(\zeta,t) = -\frac{\partial z}{\partial \zeta}(\zeta,t),\,\, z(\zeta,0) = z_0(\zeta)
\label{State_CE}
\end{equation}
together with the boundary inputs and outputs 
\begin{align}
u(t) &= z(0,t) + \frac{1}{2}z(1,t),\label{Input_CE}\\
y(t) &= z(0,t).\label{Output_CE}
\end{align}
According to Section \ref{sec:SystemClass}, there holds
\begin{align*}
K &= -1,~ L = -\frac{1}{2},~ K_y = -1,~ L_y = 0,
\end{align*}
respectively. This implies that the matrices $A_d, B_d, C_d$ and $D_d$ are expressed as
\begin{align*}
A_d = -\frac{1}{2},~ B_d = 1,~ C_d = -\frac{1}{2},~ D_d = 1,
\end{align*}
respectively. Moreover, \eqref{State_CE} with  \eqref{Input_CE} and \eqref{State_CE} may be written in an abstract way as $\dot{z}(t) = Az(t) + Bu(t)$ with $y(t) = Cz(t) + Du(t)$, where the (unbounded) operator $A$ is defined as 
\begin{align*}
Az &= -\frac{\d z}{\d\zeta},\\
D(A) &= \{z\in \H^1(0,1;\mathbb{C}), z(0) + \frac{1}{2}z(1) = 0\}.
\end{align*}
In addition, according to \eqref{Adj_B}, the adjoint of $B$, denoted $B^*$, is given by $B^*: D(A^*)\to \mathbb{C}, B^*z = z(0)$, where 
\begin{align*}
A^*z &= \frac{\d z}{\d\zeta},\\
D(A^*) &= \{z\in\H^1(0,1;\mathbb{C}), z(1) + \frac{1}{2}z(0) = 0\}.
\end{align*} 
The operators $C:D(A)\to\mathbb{C}$ and $D:\mathbb{C}\to\mathbb{C}$ are defined by $Cz = -\frac{1}{2}z(1)$ and $Du = u$, see \eqref{OpC} and \eqref{OpD}, respectively. According to \eqref{eq:OptimalFeedback}, the optimal control input that minimizes \eqref{Cost} is given by
\begin{align*}
u^{opt}(t) = \frac{1}{2}(2 + \Pi)^{-1}(1 + \Pi)z^{opt}(1,t),
\end{align*}
where $\Pi$ is the nonnegative solution to the following scalar Riccati equation
\begin{equation}
4\Pi^2 + 7\Pi - 1 = 0,
\label{Scalar_Riccati}
\end{equation}
see \eqref{CARE}. The optimal cost is given by \eqref{OptimalCost}. Let us focus now on the Riccati equation \eqref{Riccati_WeakForm}. According to \eqref{OptimalCost}, a natural choice for the operator $\Theta$ is given by $\Theta z = \Pi z$, with $\Pi$ a solution to \eqref{Scalar_Riccati}. From this definition and the definition of $B^*$, it is clear that for $z\in D(A)$, $\Theta z$ does not belong to $D(A^*)$, which means that the operator $B^*$ may not be applied to $\Theta z$. To solve this, we replace $B^*$ by its Yosida extension, given by $B^*_\omega f = f(0^+)$, see \eqref{Yosida_Ext_B}. The problem highlighted before is then solved by replacing $B^*$ by $B_\omega^*$ in \eqref{Riccati_WeakForm}. Unfortunately, another problem appears when considering \eqref{Riccati_WeakForm}. To see this, let us compute the left-hand side of \eqref{Riccati_WeakForm}. For $z\in D(A)$, there holds
\begin{align*}
&\langle Az,\Theta z\rangle + \langle\Theta z,A z\rangle + \langle Cz,Cz\rangle\\
&=-\Pi\int_0^1 z^*\frac{\d z}{\d\zeta}\d\zeta - \Pi \int_0^1\frac{\d z^*}{\d\zeta}z\d\zeta + \frac{1}{4}z(1)^*z(1).
\end{align*}
Integration by parts and $z\in D(A)$ imply that
\begin{align}
\langle Az,\Theta z\rangle + \langle\Theta z,A z\rangle + \langle Cz,Cz\rangle &= \Pi z(0)^*z(0) + \frac{1-4\Pi}{4}z(1)^*z(1)\nonumber\\
&= \frac{1-3\Pi}{4}z(1)^*z(1).\label{LHS_CE}
\end{align}
In addition, the right-hand side of \eqref{Riccati_WeakForm} with $B^*$ replaced by $B^*_\omega$ is given by 
\begin{align*}
&\langle (I + D^*D)^{-1}(B^*_\omega\Theta + D^*C)z,(B^*_\omega\Theta + D^*C)z\rangle\\
&= (I + D^*D)^{-1}\left(\Pi z(0)^* - \frac{1}{2}z(1)^*\right)\left(\Pi z(0) - \frac{1}{2}z(1)\right).
\end{align*}
Using the equality $z(0) = -\frac{1}{2}z(1)$ for $z\in D(A)$ and the definition of $D$ implies that
\begin{align}
&\langle (I + D^*D)^{-1}(B^*_\omega\Theta + D^*C)z,(B^*_\omega\Theta + D^*C)z\rangle\nonumber\\
&=\frac{(\Pi + 1)^2}{8}z(1)^*z(1).\label{RHS_CE}
\end{align}
Equality between \eqref{LHS_CE} and \eqref{RHS_CE} implies $\Pi^2 + 8\Pi - 1 = 0$, which is different from \eqref{Scalar_Riccati}. Hence, the Riccati equation \eqref{Riccati_WeakForm} has to be adapted. In particular, the operator $(I + D^*D)^{-1}$ in the right-hand side has to be modified as explained in Section \ref{sec:Riccati_SpecFact}. In particular, $(I + D^*D)^{-1}$ has to be replaced by $\Omega^*\Omega$ with $\Omega = P^{\frac{1}{2}}, P = I + D_d^*D_d + B_d^*\Pi B_d$, see Remark \ref{rem:SpecFact}. Making this replacement in \eqref{RHS_CE} implies that
\begin{align}
&\langle (I + D_d^*D_d + B_d^*\Pi B_d)^{-1}(B^*_\omega\Theta + D^*C)z,(B^*_\omega\Theta + D^*C)z\rangle\nonumber\\
=&\frac{(\Pi + 1)^2}{4(\Pi+2)}.\label{RHS_CE_Modif}
\end{align}
Equality between \eqref{LHS_CE} and \eqref{RHS_CE_Modif} is equivalent to $4\Pi^2 + 7\Pi - 1 = 0$, which is identical to \eqref{Scalar_Riccati}.

\section{Perspectives}\label{sec:Ccl}
Extending the proposed approach to a more general class of PDEs, allowing for instance to replace the function $\lambda_0$ by a diagonal matrix with possibly different entries could be considered as future work. In addition, the dual versions of the Riccati equations \eqref{Riccati_Unbounded} and \eqref{ON_Riccati} could be considered in order to tackle the Kalman filter problem. The meaning of duality in this context should be properly addressed because of the unboundedness of both the control and the observation operators.

\section*{Acknowledgments}
This work was supported by the German Research Foundation (DFG). A. H. is supported by the DFG under the Grant HA 10262/2-1.

\bibliographystyle{elsarticle-num} 
\bibliography{biblio_Main}

\end{document}